\title[Canonical representations in positive characteristic]{On the canonical representation of curves in positive characteristic}
\author{Ruthi Hortsch}  
\address{Department of Mathematics,
Massachusetts Institute of Technology,
77 Massachusetts Avenue,
Cambridge, MA 02139 } 
\email{rhortsch@math.mit.edu}   
\thanks{This paper was researched and written for an NSF REU under the mentorship of Bryden Cais. The help of Stephen DeBacker was also essential for this project. Additional thanks goes to E. Brooks, B. Conrad, S. Glasman, D. Moreland, A. Kumar, B. Poonen, K. Smith, L. Spice, and P. Srinivasan, as well as to the referee, who made several helpful suggestions.} 
\keywords{}
\subjclass{}
\newtheorem{theorem}{Theorem}
\newtheorem{lemma}[theorem]{Lemma}
\newtheorem{prop}[theorem]{Proposition}
\newtheorem{cor}[theorem]{Corollary}
\numberwithin{equation}{section}
\numberwithin{theorem}{section}
\newcommand{\HD}{H^0(X, \Omega^1_{X/k})}
\newcommand{\HdR}{H^1_{dR} (X/k)}
\newcommand{\HF}{H^1(X, \mathcal{O}_X)}
\renewcommand{\div}{\operatorname{div}}
\newcommand{\Aut}{\operatorname{Aut}}
\newcommand{\Sym}{\operatorname{Sym}}
\newcommand{\F}{{\bf F}}
\newcommand{\SL}{\operatorname{SL}}
\newcommand{\vHdR}{\check{H}^1_{dR}(\mathcal{U})}
\newcommand{\isommap}{\stackrel{\sim}{\to}}
\begin{document}

\begin{abstract}  
Given a smooth curve, the canonical representation of its automorphism group is the space of global regular 1-forms as a representation of the automorphism group of the curve. In this paper, we study an explicit set of curves in positive characteristic with irreducible canonical representation whose genus is unbounded. Additionally, we study the implications this has for the de Rham hypercohomology as a representation of the automorphism group.
\end{abstract}

\maketitle
\setcounter{tocdepth}{1}
\tableofcontents

\section{Introduction}

\subsection{Motivation}
Consider a smooth projective curve $X$ of genus $g$ over an algebraically closed field $k$. The {\em canonical representation} of $G:=\Aut(X)$ is the $g$-dimensional $k$-representation of $G$ induced by the natural action of $G$ on the space of global regular 1-forms on $X$. In characteristic zero, the canonical representation has been determined in the work of Chevalley and Weil, responding to a query by Hecke (see \cite{ChWe}, \cite{Weil}), and these methods extend to positive characteristic if the characteristic of $k$ does not divide $|G|$. If the characteristic divides $|G|$, Kani and Nakajima have separately resolved the case where $X \rightarrow X/G$ is tamely ramified (see \cite{Kani}, \cite{Nak1}, \cite{Nak2}), while Valentini and Madan resolved the case where $G$ is cyclic of a $p$-power degree (see \cite{ValMad}) (a result that Nakajima later generalized to any invertible $G$-sheaf, see \cite{Nak3}). Most recently, K\"ock has generalized these results to the situation where the cover is weakly ramified, i.e. when all the higher ramification groups vanish at all points (see \cite{Ko}).

In this paper, we focus on the natural question of when the canonical representation is irreducible. If it is and $k$ has characteristic zero, then $g^2 \le |G|$. Together with the Hurwitz bound $|G| \le 84(g-1)$, this implies that $g \le 82$. This situation is studied in \S19 of \cite{Bre}, which includes a list of all possible genera and automorphism groups for which the canonical representation is irreducible. The Klein quartic $X(7)$ is a particularly important example of this phenomenon in characteristic zero (see \cite{Elk}).

The situation is quite different in positive characteristic. The Hurwitz bound continues to hold for characteristic $p>0$, provided that $2 \le g<p$, with the sole exception of the curve $y^2=x^p-x$ (see \cite{Roq}). However, in the general case, the bounds on irreducible representations are weaker, and thus we get no upper bound independent of $|G|$ on the genera of curves with irreducible canonical representation. This presents the possibility of having curves with arbitrarily large genus and an irreducible canonical representation. This paper shows that this holds for the projective smooth curve corresponding to $y^2=x^p-x$, which has genus $(p-1)/2$ (this curve is also studied over finite fields in \cite{GJK}). The question remains open whether for fixed $p$ there exist curves over $k$ with irreducible canonical representation of arbitrary large dimension.

\subsection{Summary of Results}

Let $k$ be an algebraically closed field of characteristic $p > 2$, and let $X$ be the unique smooth projective curve over k with affine equation $y^2=x^p-x$. Set $G:=\Aut_k(X)$, which is a degree $2$ central extension of $\operatorname{PGL}_2(\F_p)$ (discussed in \cite{Roq} \S5). Explicitly, let $\widetilde{G} \subseteq GL_2(\F_p) \times \F_{p^2}^\times$ be the subgroup of elements $(\sigma, u_\sigma)$ with $\det \sigma = u_\sigma^2$. Then $\F_p^\times$ acts diagonally on $\widetilde{G}$ via \[\lambda (\sigma, u_{\sigma})= \left( \left(\begin{smallmatrix}
\lambda & 0\\
0 & \lambda \\
\end{smallmatrix}\right) \sigma, \lambda^{(p+1)/2} u_\sigma \right),\] and $G= \F_p^\times \backslash \widetilde{G}$. We will consistently represent elements of $G$ by chosen lifts in $\widetilde{G}$. Then if $\sigma = \left(\begin{smallmatrix}
a & b\\
c & d\\
\end{smallmatrix}\right) \in GL_2(\F_p)$, and $u_\sigma$ is a choice of square root of $\det \sigma$, we can write the action of $G$ on the affine coordinates of $X$ as
\begin{equation*}
(x,y) \mapsto \left(\frac{ax+b}{cx+d}, u_{\sigma}\frac{y}{(cx+d)^{(p+1)/2}}\right).
\end{equation*}

In this paper, we show that $\HD$ is irreducible as a $k[G]$-module. We do this by considering a small index subgroup $H$ of $G$ that is a quotient of $\SL_2(\F_p)$. Noting that $\HD$ has dimension $g=(p-1)/2$ as a $k$-vector space, we get that the restriction of the canonical representation to this subgroup is the $(g-1)^{st}$ symmetric power of the standard representation of $\SL_2(\F_p)$ on $k^2$, which is well-known to be irreducible. It is not difficult to then describe $\HD$ as a $k[G]$-submodule of the induced representation of $\Sym^{g-1}(k^2)$.

Note that since $G$ is not a cyclic $p$-group, this is beyond the results of Valentini and Madan mentioned earlier. Furthermore, the cover $X \rightarrow X/G$ is wildly (but not weakly) ramified: the point at infinity and $\{ (x, 0) \in X : x \in \F_p \}$ are an orbit of $X$ under $G$, and since this orbit has $p+1$ elements, each is ramified of degree $2p(p-1)$. The previous work of Kani and Nakajima applies only where $X \rightarrow X/G$ is tamely ramified, and thus not to this case.

In the second half of this paper, we consider the de Rham hypercohomology as a $k[G]$-module. In characteristic $p$, Maschke's Theorem does not apply if $p$ divides $|G|$ and it is possible to find representations that are reducible but not semisimple. If a nonzero representation is not the direct sum of two proper subrepresentations, call it {\em indecomposable}. The de Rham hypercohomology of any curve $C$ is clearly reducible, since $H^0(C, \Omega_{C/k}^1)$ embeds as a $k[\Aut(C)]$-submodule, but we show that in the case of the curve $X$, the de Rham cohomology is indecomposable as a $k[G]$-module. 

It has been previously shown that the crystalline cohomology of $X$ tensored with ${\bf Q}_p$ is irreducible (see \cite{Katz}, Proposition 7.2). Knowing that the crystalline cohomology is irreducible as a $k[G]$-module, it is not necessarily true that $\HdR$ should be indecomposable as a $k[G]$-module, since its $G$-invariant subrepresentations do not lift to the crystalline cohomology. Thus we do not necessarily expect $\HdR$ to be indecomposable as a $k[G]$-module, especially since it is an unusual (and distinctly positive characteristic) phenomena to have a reducible indecomposable representation.

\section{The action of $G$ on $\HD$}\label{sec:HD}

As $\HD$ is functorial in $X$, it has a $k$-linear action of $G$. To determine the structure of of $\HD$ as a $k[G]$-module, we will use the following classical result about its structure as a $k$-vector space. Throughout the paper, we use the notation that $P_t= (t,0)$ for $t=0,1,\dots,p-1,$ and $P_{\infty}$ is the ``point at infinity" of $X$.

\begin{lemma}\label{lem:HDbasis}
Let 
\begin{equation}\label{HDbasis}
\tilde{\tau}_i = \frac{x^idx}{y}.
\end{equation}
The set $\{ \tilde{\tau}_i \mid i= 0, 1, \dots, g-1\}$ is then a $k$-basis for $\HD$.
\end{lemma}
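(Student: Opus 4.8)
The plan is to compute the divisor of the $1$-form $\omega_0:=dx/y$ on $X$, use it to see that each $\tilde{\tau}_i$ with $0\le i\le g-1$ is a global regular $1$-form, and then conclude by comparing with $\dim_k\HD=g$ after checking linear independence.

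First I would record the divisors of the basic rational functions on $X$. The map $x\colon X\to\mathbf{P}^1$ is separable of degree $2$, ramified of index $2$ (tamely, since $p$ is odd) exactly over $\infty$ and the points of $\F_p$, with $x^{-1}(\infty)=\{P_\infty\}$ and $x^{-1}(t)=\{P_t\}$ for $t\in\F_p$. Hence $\div(x-t)=2P_t-2P_\infty$ for $t\in\F_p$, and squaring $y^2=\prod_{t\in\F_p}(x-t)$ gives $\div(y)=\sum_{t\in\F_p}P_t-pP_\infty$. For $\div(dx)$ I would invoke the formula $\div(dx)=x^*(-2[\infty])+R_x$, where $-2[\infty]$ is the divisor of the coordinate differential on $\mathbf{P}^1$ and $R_x=\sum_{t\in\F_p}P_t+P_\infty$ is the ramification divisor; since $x^*[\infty]=2P_\infty$ this yields $\div(dx)=\sum_{t\in\F_p}P_t-3P_\infty$, whence
\[
\div(\omega_0)=\div(dx)-\div(y)=(p-3)P_\infty=(2g-2)P_\infty .
\]
In particular $\omega_0\in\HD$, with all of its zeros concentrated at $P_\infty$.

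Next, since $x$ has a pole only at $P_\infty$, for $0\le i\le g-1$ we get
\[
\div(\tilde{\tau}_i)=i\,\div(x)+\div(\omega_0)=2iP_0+(2g-2-2i)P_\infty\ \ge\ 0,
\]
so each $\tilde{\tau}_i$ lies in $\HD$. For linear independence, note that $\operatorname{ord}_{P_\infty}(\tilde{\tau}_i)=2g-2-2i$ takes the $g$ distinct values $0,2,\dots,2g-2$; thus in any relation $\sum_i c_i\tilde{\tau}_i=0$ the nonzero term of smallest index would impose a finite order of vanishing at $P_\infty$, forcing all $c_i=0$. This exhibits $\tilde{\tau}_0,\dots,\tilde{\tau}_{g-1}$ as $g$ linearly independent vectors in the $g$-dimensional space $\HD$, hence a basis.

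I expect the one subtle point to be the behavior at $P_\infty$, i.e.\ the coefficient $-3$ of $P_\infty$ in $\div(dx)$ (equivalently, that $\omega_0$ is regular with a zero of order exactly $2g-2$ there). I would double-check this either via the ramification formula as above, or by a direct local computation: $s:=x^g/y$ is a uniformizer at $P_\infty$, and from $s^2=x^{2g}/(x^p-x)$ one reads off $x=s^{-2}\cdot(\mathrm{unit})$ and $y=s^{-(2g+1)}\cdot(\mathrm{unit})$, so that $\omega_0=s^{2g-2}\cdot(\mathrm{unit})\,ds$. Everything else is routine divisor bookkeeping. (One could instead avoid $\div(\omega_0)$ altogether and verify regularity of each $\tilde{\tau}_i$ point by point — immediate away from ramification, via the uniformizer $y$ at each $P_t$, and via $s$ at $P_\infty$ — but the divisor computation is cleaner and hands us linear independence for free.)
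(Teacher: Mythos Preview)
Your argument is correct and matches the paper's proof almost exactly: both compute $\div(x)$, $\div(y)$, $\div(dx)$, deduce $\div(\tilde\tau_i)=2iP_0+(2g-2-2i)P_\infty\ge 0$, and infer linear independence from the distinct orders at $P_\infty$. One small slip: in your independence step it is the nonzero term of \emph{largest} index that has minimal order of vanishing at $P_\infty$ and hence dominates (alternatively, run the argument at $P_0$, where ``smallest index'' is the right phrase).
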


\begin{proof} This follows from the calculations below, which we will use later in the paper:
\begin{align*}
\div(x) &= 2P_0 + (-2)P_{\infty}\\
\div(y) &= P_0 + P_1 + ... + P_{p-1} + (-p)P_{\infty}\\
\div(dx) &= P_0 + P_1 + ... + P_{p-1}+ (-3)P_{\infty}.
\end{align*}
So $\div(\tilde{\tau}_i)=2iP_0 + (p-2i-3)P_\infty$, and $\tilde{\tau}_i$ is regular for $i=0, \ldots, g-1$. They are linearly independent since they have different pole orders at $P_\infty$, and thus a $k$-basis.
\end{proof}

To study $\HD$ as a $G$-representation, we begin by considering a specific subgroup of small index. Consider the morphism
$\theta: \SL_2(\F_p) \rightarrow G$ that sends $\sigma \in \SL_2(\F_p)$ to the equivalence class of $(\sigma, 1)$, and let $H =\operatorname{im}\theta$.

One can check that
\begin{equation*}
\operatorname{ker}\theta =
\begin{cases}
I & \text{ if } p \equiv 1 \text{ mod } 4 \\
\pm I & \text{ if } p \equiv 3 \text{ mod } 4.
\end{cases}
\end{equation*}
As $|G|=2(p-1)p(p+1)$ and $|\SL_2(\F_p)|=(p-1)p(p+1)$, we conclude $|G:H|=2$ if $p\equiv 1$ mod 4, and that $|G:H|=4$ if $p\equiv 3$ mod 4.

Let $\Sym^{g-1}(k^2)$ be the $(g-1)^{st}$ symmetric power of the standard representation of $\SL_2(\F_p)$ on $k^2$. We identify $\Sym^{g-1}(k^2)$ with the $k$-subspace of $k[u,v]$ consisting of homogeneous polynomials with degree $g-1$. This space has basis $\{ v^{g-1}, v^{g-2}u, \dots , u^{g-1}\}$ and $\SL_2(\F_p)$-action given by \[
\left( \begin{array}{cc}
a & b\\
c & d
\end{array}
\right)
\left( \begin{array}{c}
u \\
v 
\end{array}
\right) =
\left( \begin{array}{c}
au+bv \\
cu+dv
\end{array}
\right).
\]
Note that if $p \equiv 3$ mod $4$, then $g-1$ is even, and the action of $\SL_2(\F_p)$ on $\Sym^{g-1}(k^2)$ is trivial on $\pm I$. Because of this, we can define an $H$-action on $\Sym^{g-1}(k^2)$ where if $h \in H$ such that $\theta(\sigma)=h$, then $h$ acts as $\sigma$ does. This is well-defined since the kernel of $\theta$ acts trivially. Let $V$ be this $H$-representation.

\begin{prop}\label{irrHom}
The map \begin{equation*}
\varphi: \operatorname{res}_H(\HD) \rightarrow V \qquad \text{ given by }\qquad \frac{x^idx}{y}\mapsto u^iv^{g-i-1}
\end{equation*} is a $k[H]$-module isomorphism.
\end{prop}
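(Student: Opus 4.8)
The plan is to verify directly that $\varphi$ is a well-defined $k$-linear isomorphism and then check it commutes with the $H$-action on generators of $H$. Since Lemma~\ref{lem:HDbasis} gives that $\{\tilde\tau_i = x^i dx/y : 0 \le i \le g-1\}$ is a $k$-basis of $\HD$, and $\{u^i v^{g-1-i} : 0 \le i \le g-1\}$ is by construction a $k$-basis of $V = \Sym^{g-1}(k^2)$, the map $\varphi$ sending one basis to the other is automatically a $k$-linear isomorphism. So the entire content of the proposition is the $G$-equivariance, or rather the $H$-equivariance after restriction. The main obstacle is purely computational: one must carry through the somewhat messy substitution coming from the explicit formula for the $G$-action on $(x,y)$ and match it against the symmetric-power action, keeping careful track of the factor $(cx+d)$ and the square-root factor $u_\sigma$.

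First I would reduce to checking equivariance on a generating set of $\SL_2(\F_p)$. A convenient choice is the lower triangular unipotent matrices and the diagonal matrices, or more classically the matrices $\left(\begin{smallmatrix} 1 & b \\ 0 & 1 \end{smallmatrix}\right)$, $\left(\begin{smallmatrix} a & 0 \\ 0 & a^{-1} \end{smallmatrix}\right)$, and $w = \left(\begin{smallmatrix} 0 & -1 \\ 1 & 0 \end{smallmatrix}\right)$, which generate $\SL_2(\F_p)$. For each such $\sigma$ I would take $u_\sigma = 1$ (legitimate since $\det\sigma = 1$), so the $G$-action is $(x,y) \mapsto \left(\frac{ax+b}{cx+d}, \frac{y}{(cx+d)^{(p+1)/2}}\right)$, and compute the pullback of $\tilde\tau_i = x^i dx / y$. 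For the diagonal and unipotent generators $c = 0$ and the computation is immediate; the only real work is the element $w$, where $cx + d = x$, so $x \mapsto -1/x$, $y \mapsto y/x^{(p+1)/2}$, $dx \mapsto dx/x^2$, giving
\begin{equation*}
w^\ast \tilde\tau_i = \frac{(-1/x)^i \, dx/x^2}{y/x^{(p+1)/2}} = (-1)^i \frac{x^{(p+1)/2 - i - 2}\,dx}{y} = (-1)^i \tilde\tau_{g-1-i},
\end{equation*}
using $(p+1)/2 - 2 = g - 1$. On the other side, $w$ acts on $\Sym^{g-1}(k^2)$ by $u \mapsto -v$, $v \mapsto u$, so $w \cdot u^i v^{g-1-i} = (-v)^i u^{g-1-i} = (-1)^i u^{g-1-i} v^i$, which is exactly $\varphi(w^\ast \tilde\tau_i)$. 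An entirely analogous (easier) check handles the other two generators, and since $G$ acts on forms contravariantly I would be careful to state whether $\varphi$ intertwines the left action or its inverse, adjusting conventions so that the stated map is genuinely a $k[H]$-isomorphism.

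Finally I would note the one subtlety already flagged in the text: when $p \equiv 3 \bmod 4$ the representation $V$ is only defined because $-I \in \ker\theta$ acts trivially on $\Sym^{g-1}(k^2)$ (as $g-1$ is even), so $H$ really does act on $V$; when $p \equiv 1 \bmod 4$, $\theta$ is injective and there is nothing to check. In either case the computations above are insensitive to this, since they only use lifts with $u_\sigma = 1$, so the proof goes through uniformly. The expected main obstacle, then, is not conceptual but bookkeeping: making sure the exponent arithmetic $(p+1)/2 - i - 2 = g - 1 - i$ and the signs from $u_\sigma$ and from $w$ all line up, and confirming that the chosen generators of $\SL_2(\F_p)$ indeed suffice.
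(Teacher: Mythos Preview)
Your proposal is correct and follows essentially the same approach as the paper: a direct computational verification that $\varphi$ intertwines the $H$-actions, after noting that it is visibly a $k$-linear bijection of bases. The only difference is tactical---you check equivariance on a generating set $\{\left(\begin{smallmatrix}1&b\\0&1\end{smallmatrix}\right),\left(\begin{smallmatrix}a&0\\0&a^{-1}\end{smallmatrix}\right),w\}$, whereas the paper simply computes for an arbitrary $\left(\begin{smallmatrix}a&b\\c&d\end{smallmatrix}\right)\in\SL_2(\F_p)$ in one pass (using $d\!\left(\frac{ax+b}{cx+d}\right)=(cx+d)^{-2}dx$ and $(p+1)/2=g+1$), which is in fact no longer than your $w$-computation alone and obviates the need to invoke generators or worry separately about contravariance.
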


\begin{proof}
Clearly $\varphi$ is an isomorphism of vector spaces over $k$, so we need only check that $\varphi$ is $H$-equivariant. Let $h=(\gamma,1) \in H$ for $\gamma= \left(\begin{smallmatrix}
a & b\\
c & d\\
\end{smallmatrix}\right) \in \SL_2(\F_p)$. We calculate
\begin{align*}
h \circ \varphi\left(x^iy^{-1}dx\right) &= h(u^iv^{g-i-1})\\
&=(au+bv)^i(cu+dv)^{g-i-1}
\end{align*} and
\begin{align*}
\varphi \circ h(x^iy^{-1}dx) &= \varphi \left(\left(\frac{ax+b}{cx+d}\right)^i\left(\frac{y}{(cx+d)^{g+1}}\right)^{-1}d\left(\frac{ax+b}{cx+d}\right)\right)\\
&= \varphi((ax+b)^i(cx+d)^{g-i-1}y^{-1}dx)\\
&= (au+bv)^i(cu+dv)^{g-i-1},
\end{align*}
since $\displaystyle d\left(\frac{ax+b}{cx+d}\right) = (cx+d)^{-2} dx$.
\end{proof}

\begin{cor}\label{irreducible}
As a $G$-representation, $\HD$ is irreducible. Moreover, $\HF$ is naturally the contragredient of $\HD$, hence irreducible as well.
\end{cor}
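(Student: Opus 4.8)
The plan is to deduce both statements with almost no extra work: the irreducibility of $\HD$ over $G$ is immediate from Proposition~\ref{irrHom}, and the description of $\HF$ is Serre duality combined with the elementary fact that the dual of a simple module is simple. For the first assertion, by Proposition~\ref{irrHom} we have $\operatorname{res}_H(\HD)\cong V$ as $k[H]$-modules. Since $\theta\colon\SL_2(\F_p)\to G$ is surjective onto $H$ and $V$ was defined so that its pullback along $\theta$ is $\Sym^{g-1}(k^2)$, the $k[H]$-submodules of $V$ are exactly the $k[\SL_2(\F_p)]$-submodules of $\Sym^{g-1}(k^2)$; and $\Sym^{g-1}(k^2)$ is irreducible because $\Sym^n(k^2)$ is a simple $k[\SL_2(\F_p)]$-module for every $0\le n\le p-1$ and $g-1=(p-3)/2$ lies in this range. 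Hence $\operatorname{res}_H(\HD)$ is irreducible over $H$, so any nonzero $k[G]$-submodule of $\HD$, being a nonzero $k[H]$-submodule, must be all of $\HD$. Thus $\HD$ is irreducible as a $k[G]$-module; I expect no obstacle here, the content being already carried by Proposition~\ref{irrHom}.

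For the second assertion, I would invoke Serre duality for the smooth projective curve $X$: the cup-product pairing
\[
H^1(X,\mathcal{O}_X)\times H^0(X,\Omega^1_{X/k})\ \longrightarrow\ H^1(X,\Omega^1_{X/k})\ \cong\ k,
\]
with the last map the trace isomorphism, is a perfect pairing of $k$-vector spaces. The point requiring attention is that this pairing is $G$-invariant. For $\sigma\in G$, pullback along the $k$-automorphism $\sigma\colon X\to X$ is functorial for the cup product and carries the canonical identification $\sigma^*\Omega^1_{X/k}\isommap\Omega^1_{X/k}$; and the trace map $H^1(X,\Omega^1_{X/k})\cong k$ is intrinsic to $X/k$, hence compatible with $\sigma^*$. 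Therefore $\langle\sigma^*\alpha,\sigma^*\beta\rangle=\langle\alpha,\beta\rangle$ for all $\alpha\in\HF$ and $\beta\in\HD$, and the pairing induces an isomorphism of $k[G]$-modules $\HF\isommap(\HD)^\vee$; that is, $\HF$ is the contragredient of $\HD$.

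Finally, the contragredient of an irreducible representation is irreducible: a nonzero proper $k[G]$-submodule $U\subseteq(\HD)^\vee$ would have a nonzero proper annihilator inside $\HD$, contradicting the irreducibility just proved, so $\HF$ is irreducible as well. The one genuinely delicate point in all of this is the $G$-equivariance of the Serre-duality trace map; if one prefers to avoid citing its functoriality, it can be checked by hand from the residue description of the pairing, using that an automorphism of $X$ over $k$ permutes the closed points and preserves residues.
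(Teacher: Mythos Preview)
Your proposal is correct and follows essentially the same line as the paper's proof: irreducibility of $\Sym^{g-1}(k^2)$ over $\SL_2(\F_p)$ passes to $V$ over $H$ and hence to $\HD$ over $G$, and Serre duality, being $G$-equivariant, identifies $\HF$ with the contragredient of $\HD$. You simply supply more justification for the $G$-equivariance of the pairing and for why duals of irreducibles are irreducible, points the paper states without elaboration.
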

\begin{proof} It is well-known that $\Sym^{g-1}(k^2)$ is irreducible as a representation of $\SL_2(\F_p)$ (see the discussion following Corollary 3 of Chapter 3 in \cite{Alp} 14-16). It is clear from the way we defined $V$ that it is thus irreducible as an $H$-representation, and this implies it is irreducible as a $G$-representation. The Serre duality pairing $\HD \times \HF \rightarrow k$ is $G$-equivariant, so $\HF$ is canonically the contragredient of $\HD$, and thus irreducible. \end{proof}

We can also explicitely describe the structure of $\HD$ as a subrepresentation of the induced representation of $V$.

\begin{prop}
If $p \equiv 1$ mod 4, then $\operatorname{Ind}_H^G(V) \cong V^2$ and $\HD$ can be embedded as the subrepresentation \[\{ (v,-v): v \in V\}.\] If $p \equiv 3$ mod 4, then $\operatorname{Ind}_H^G(V) \cong V^4$, then $\HD$ can be embedded as the subrepresentation spanned by \[ \{(\varphi(\tilde{\tau}_j), (-1)^ji \varphi(\tilde{\tau}_j), -\varphi(\tilde{\tau}_j), (-1)^{j+1}i\varphi(\tilde{\tau}_j)) \in V^4 \mid j=0, \dots, g-1\},\]  where $i \in k$ such that $i^2=-1$, and using the same notation as in Lemma~\ref{HDbasis} and Proposition~\ref{irrHom} for $\tilde{\tau}_j$ and $\varphi$.
\end{prop}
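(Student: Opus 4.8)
The plan is to combine Frobenius reciprocity with the explicit $H$-isomorphism $\varphi$ from Proposition~\ref{irrHom} and the known structure of the coset space $G/H$. First I would set up coset representatives for $H$ in $G$. In both cases $H$ is the image of $\SL_2(\F_p)$ under $\theta$; the obstruction to surjectivity is the square-root coordinate $u_\sigma$. When $p\equiv 1\bmod 4$, $-1$ is a square in $\F_p$, and I can take coset representatives by varying the scalar $u_\sigma \in \F_p^\times$ up to the diagonal $\F_p^\times$-action; one checks $[G:H]=2$ and picks an explicit representative $g_1$ not in $H$ (for instance the class of a scalar matrix paired with a non-square scalar, or equivalently an element inducing $(x,y)\mapsto(x,-y)$). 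When $p\equiv 3\bmod 4$, $[G:H]=4$ and I need four representatives; the extra factor of $2$ comes from $\pm I\in\ker\theta$ together with the $u_\sigma$ ambiguity, and a natural choice is generated by the element inducing $(x,y)\mapsto(x,iy)$ (using $i\in k$ with $i^2=-1$, which makes sense since $i\in\F_p$ when $p\equiv 1$ but one must be careful: when $p\equiv 3$, $i\notin\F_p$, so I instead track the relevant order-$4$ element of $G$ directly from the $\widetilde G$ description).

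Next I would compute $\operatorname{Ind}_H^G(V)$ as a $k[H]$-module after restriction. Since $\HD$ restricted to $H$ is $V$ (Proposition~\ref{irrHom}) and $V$ is irreducible as an $H$-module (Corollary~\ref{irreducible}), and since $\HD$ is irreducible as a $G$-module, Frobenius reciprocity gives $\operatorname{Hom}_G(\HD, \operatorname{Ind}_H^G V) = \operatorname{Hom}_H(\operatorname{res}_H\HD, V) = \operatorname{Hom}_H(V,V)$, which is one-dimensional (Schur, since $V$ is absolutely irreducible as $\Sym^{g-1}$ of the standard representation). This produces a canonical (up to scalar) $G$-embedding $\HD\hookrightarrow \operatorname{Ind}_H^G V$. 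Separately, I would show $\operatorname{Ind}_H^G V\cong V^{[G:H]}$ as $G$-modules: because the chosen coset representatives can be taken to act on $V$ through $H$ (i.e. conjugation by them stabilizes $V$ up to isomorphism — indeed the outer elements act by scaling $y$, which under $\varphi$ is an automorphism of $V$ commuting with the $H$-action up to sign), the induced module decomposes as a direct sum of copies of $V$. Concretely, $\operatorname{Ind}_H^G V = \bigoplus_{j} g_j\otimes V$ and the $G$-action permutes the summands while acting within each by elements of $H$ post-composed with the scaling automorphism; tracking this gives the isomorphism with $V^{[G:H]}$ explicitly.

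The final and most delicate step is to pin down the image of the canonical embedding inside $V^{[G:H]}$ under this explicit identification — that is, to verify that a basis element $\tilde\tau_j$ maps to the stated vector. I would do this by evaluating the embedding on $\tilde\tau_j$: the image in $\operatorname{Ind}_H^G V = \bigoplus_j g_j\otimes V$ has $g_j$-component obtained by applying $g_j^{-1}$ to $\tilde\tau_j$ inside $\HD$ and then applying $\varphi$. Since $g_j$ (for the nontrivial cosets) acts on $\HD$ by scaling $y$, hence scales $\tilde\tau_i=x^i\,dx/y$ by a root of unity depending on $i$ — precisely a sign $(-1)^i$ for the order-$2$ representative and a power of $i$ for the order-$4$ representative — the $g_j$-components come out as $\pm\varphi(\tilde\tau_j)$ and $(\pm i)\varphi(\tilde\tau_j)$ exactly as in the statement. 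The $p\equiv 1$ case then reads as $\{(v,-v)\}$, and the $p\equiv 3$ case gives the four-tuple $(\varphi(\tilde\tau_j), (-1)^j i\,\varphi(\tilde\tau_j), -\varphi(\tilde\tau_j), (-1)^{j+1} i\,\varphi(\tilde\tau_j))$. The main obstacle is bookkeeping: keeping the coset representatives, the trivialization $\operatorname{Ind}_H^G V\cong V^{[G:H]}$, and the scaling action of the outer elements all consistent, especially in the $p\equiv 3\bmod 4$ case where $i\notin\F_p$ and the element of order $4$ must be read off carefully from the $\widetilde G$-presentation rather than from a naive matrix formula.
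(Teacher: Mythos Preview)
Your approach is essentially the paper's: invoke Frobenius reciprocity to see that $\operatorname{Hom}_G(\HD,\operatorname{Ind}_H^G V)\cong\operatorname{Hom}_H(V,V)$ is one-dimensional, pick explicit coset representatives for $G/H$, compute how each representative scales the basis elements $\tilde\tau_j$, and read off the image of the (unique up to scalar) embedding. Two bookkeeping corrections: in the $p\equiv 1\bmod 4$ case the representative $\alpha$ (the class of $(I,-1)$, acting by $(x,y)\mapsto(x,-y)$) scales every $\tilde\tau_j$ by $-1$, not by $(-1)^j$ as you wrote; and in the $p\equiv 3\bmod 4$ case the order-$4$ representative used in the paper is the class of $\bigl(\bigl(\begin{smallmatrix}-1&0\\\phantom{-}0&1\end{smallmatrix}\bigr),i\bigr)$, acting as $(x,y)\mapsto(-x,iy)$ --- your candidate $(x,y)\mapsto(x,iy)$ is not in $G$, as you suspected, since it would require $u_\sigma^2=\det I=1$.
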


\begin{proof}Recall that the induced representation $\operatorname{Ind}_{H}^G(V)$ is $k[G] \otimes_{k[H]} V$. Frobenius reciprocity tells us that 
\[ \operatorname{Hom}_G(\HD, \operatorname{Ind}_H^G(V)) = \operatorname{Hom}_H(\operatorname{res}_H(\HD), V).\]
However, since Corollary~\ref{irreducible} tells us that $\operatorname{res}_H(\HD)$ and $V$ are isomorphic irreducible $H$-representations, this means that the right hand side is one dimensional. Thus there is a nonzero $k[G]$-linear map from $\HD$ into $\operatorname{Ind}_H^G(V)$ and it is unique up to scaling. (For more on irreducible and induced representations, see sections 1 and 8 respectively of \cite{Alp}.)

In the case where $p\equiv 1$ mod 4, $|G:H|=2$ so as a vector space the induced representation is $V \oplus V$. If $\alpha \in G$ corresponds to the equivalence class of $(\left( \begin{smallmatrix} 1 & 0 \\ 0 & 1\end{smallmatrix}\right), -1) \in \tilde{G}$, then $G = H \sqcup H \alpha$. Note in particular that if $\omega \in \HD$, the $\alpha \omega = -\omega$ and if $(v_1, v_2) \in \operatorname{Ind}_H^G(V)$, then $\alpha(v_1, v_2) = (v_2, v_1)$. Note that we can easily calculate the $G$-action on the induced representation from that.

In particular the map
\begin{align*}
T:\HD &\rightarrow \operatorname{Ind}_H^G(V)\\
\omega &\mapsto (\varphi(\omega), -\varphi(\omega))
\end{align*}
is $G$-equivariant. Thus $\HD$ embeds into $\operatorname{Ind}_H^G(V)$ as the $k[G]$-submodule $\{(v, -v)\in V \oplus V \}$.

If $p \equiv 3$ mod 4, then $|G:H|=4$ so as a vector space the induced representation is $V \oplus V \oplus V \oplus V$. If $\beta \in G$ corresponds to the equivalence class of $((\begin{smallmatrix} -1 & 0 \\ \phantom{-}0 & 1\end{smallmatrix}), i)$ where $i \in k$ such that $i^2=-1$, then $G = H \sqcup H \beta \sqcup H \beta^2 \sqcup H \beta^3$. Note in particular that if $\tilde{\tau}_j$ is a basis element of $\HD$ as in Lemma~\ref{HDbasis}, then $\beta\tilde{\tau}_j = (-1)^ji \tilde{\tau}_j$. If $(v_1, v_2, v_3, v_4) \in \operatorname{Ind}_H^G(V)$, then $\beta(v_1, v_2, v_3, v_4) = (\beta v_4, \beta v_1, \beta v_2, \beta v_3)$; note we can easily calculate the $G$-action on the induced representation from this. In particular, the injective map
\begin{align*}
T: \HD &\rightarrow \operatorname{Ind}_H^G(V)\\
\tilde{\tau}_j &\mapsto (\varphi(\tilde{\tau}_j), (-1)^ji \varphi(\tilde{\tau}_j), -\varphi(\tilde{\tau}_j), (-1)^{j+1}i\varphi(\tilde{\tau}_j))
\end{align*}
is $G$-equivariant.\end{proof}

\section{The action of $G$ on $\HdR$}

For any smooth proper curve $Y$ over a field $L$, $H^0(Y, \Omega_{Y/L})$ embeds as strict subrepresentation of $H_{dR}^1(Y/L)$. If $L$ is of characteristic zero, it follows from Maschke's Theorem that $H^0(Y, \Omega_{Y/L})$ is a $L[\Aut(Y)]$-module direct summand of $H_{dR}^1(Y/L)$. However, Maschke's Theorem does not apply in positive characteristic if the characteristic of $L$ divides $|\Aut(Y)|$ (as in our case), so it need not be the case that reducible implies decomposable. In this section we will show that, in fact, $\HdR$ is indecomposable as a $k[G]$-module.

For these computations we will use the \v{C}ech cohomology with the open affine cover $\mathcal{U}=\{U_1, U_2\}$, where $U_1=X-P_0$ and $U_2=X-P_{\infty}$ (for background on \v{C}ech hypercohomology, see EGA $0_{\text{III}}$ \S12 \cite{EGA} and SGA Exp V \cite{SGA}). The method used here is largely based on that in \cite{HaaJan}.  By definition, $\vHdR$ is the quotient of the $k$-vector space
\begin{equation*}
\{ (\omega_1, \omega_2, f_{12})\mid \omega_i \in \Omega_{X/k}(U_i), f_{12} \in \mathcal{O}_X(U_1 \cap U_2), df_{12}=\omega_1-\omega_2 \}
\end{equation*} by the $k$-subspace
\begin{equation*}
\{(df_1, df_2, f_1-f_2)\mid f_i \in \mathcal{O}_X(U_i) \}.
\end{equation*}

\begin{theorem}\label{nosplit}
The canonical exact sequence of $k[G]$-modules
\begin{equation} \label{eq:exactseq}
0 \rightarrow \HD \rightarrow \HdR \rightarrow \HF \rightarrow 0
\end{equation}
does not split.\end{theorem}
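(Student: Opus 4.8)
The plan is to restrict the sequence~\eqref{eq:exactseq} to the cyclic subgroup $\langle\rho\rangle$ of order $p$ generated by the automorphism $\rho:=\theta\!\left(\left(\begin{smallmatrix}1&1\\0&1\end{smallmatrix}\right)\right)$, which acts on $X$ by $(x,y)\mapsto(x+1,y)$, and to show the restricted sequence cannot split because $\rho-1$ acts on $\HdR$ with too large a nilpotency index. Concretely: a splitting of~\eqref{eq:exactseq} as $k[G]$-modules would give an isomorphism $\HdR\cong\HD\oplus\HF$ of $k[\langle\rho\rangle]$-modules. By Proposition~\ref{irrHom}, $\operatorname{res}_{\langle\rho\rangle}\HD$ is $\Sym^{g-1}$ of the standard $2$-dimensional $k[\langle\rho\rangle]$-module, on which $\rho$ acts by a single unipotent Jordan block; since $g-1<p$ this is the $g$-dimensional indecomposable $k[\Z/p]$-module, and by Corollary~\ref{irreducible} the same holds for $\HF$. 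On such modules $(\rho-1)^{g}$ acts as $0$, so a splitting would force $(\rho-1)^{g}=0$ on $\HdR$. It therefore suffices to produce one de Rham class that $(\rho-1)^{g}$ does not kill.

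The first technical step is that $\rho$ fixes $P_\infty$ but cyclically permutes $P_0,\dots,P_{p-1}$, so it does not preserve the cover $\{X-P_0,X-P_\infty\}$; I would recompute $\HdR$ with the $\rho$-stable affine cover $\mathcal{U}'=\{U_1',U_2\}$, where $U_1'=X-\{P_0,\dots,P_{p-1}\}$ is the locus $y\neq0$ and $U_2=X-P_\infty$. From the divisor computations in Lemma~\ref{lem:HDbasis} and the identity $d(y/x^{j})=\tfrac{2j-1}{2}x^{-j}y^{-1}dx-jx^{p-1-j}y^{-1}dx$ (valid since $\mathrm{char}\,k=p$), one checks that $\HD$ embeds in this \v{C}ech model via $\tilde{\tau}_i\mapsto\mu_i:=[(\tilde{\tau}_i,\tilde{\tau}_i,0)]$, and that for $j=1,\dots,g$ the triples
\[
\nu_j:=\Big[\Big(\tfrac{2j-1}{2}\,x^{-j}y^{-1}dx,\ \ j\,x^{p-1-j}y^{-1}dx,\ \ y/x^{j}\Big)\Big]
\]
are well-defined de Rham classes whose images $[y/x^{j}]$ form a $k$-basis of $\HF$.

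Next I would pin down enough of the $\rho$-action. On $\HF$ a short computation ($\rho\cdot[y/x^{j}]=[y/(x+1)^{j}]$, expanded at $P_\infty$) shows $\rho-1$ strictly raises the index with nonzero leading coefficient, so $[y/x]$ generates $\HF$ over $k[\langle\rho\rangle]$ and $(\rho-1)^{g-1}[y/x]=c\,[y/x^{g}]$ with $c\neq0$; lifting through $\HdR\twoheadrightarrow\HF$ gives $(\rho-1)^{g-1}\nu_1=c\,\nu_g+w$ with $w\in\HD$, whence $(\rho-1)^{g}\nu_1=c\,(\rho-1)\nu_g+(\rho-1)w$. The key computation is $(\rho-1)\nu_g$: since $p-1-g=g$, the triple $\rho\nu_g-\nu_g$ has third component $h:=y\big((x+1)^{-g}-x^{-g}\big)$, and because $y/x^{g}$ and $y/(x+1)^{g}$ have equal simple poles at $P_\infty$ (and otherwise only poles at $P_0$, resp.\ $P_{p-1}$), $h$ is regular at $P_\infty$, i.e.\ $h\in\mathcal{O}(U_1')$. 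Subtracting the coboundary of $(h,0)$ trivializes the third component and identifies $(\rho-1)\nu_g$ with the global regular form
\[
g\,\frac{\big((x+1)^{g}-x^{g}\big)\,dx}{y}\;=\;g\sum_{s=0}^{g-1}\binom{g}{s}\tilde{\tau}_s\ \in\ \HD,
\]
whose $\tilde{\tau}_{g-1}$-coefficient is $g\binom{g}{g-1}=g^{2}\not\equiv0\pmod p$. Finally, $\rho-1$ \emph{lowers} the index on $\HD$, so $(\rho-1)w$ has no $\tilde{\tau}_{g-1}$-component; hence the $\tilde{\tau}_{g-1}$-coefficient of $(\rho-1)^{g}\nu_1$ equals $c\,g^{2}\neq0$. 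Thus $(\rho-1)^{g}\nu_1\neq0$, contradicting the splitting, and Theorem~\ref{nosplit} follows.

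I expect the main obstacle to be the explicit computation of $(\rho-1)\nu_g$ in the \v{C}ech model — in particular noticing that $y(x+1)^{-g}-yx^{-g}$ has cancelling simple poles at $P_\infty$ and so lies in $\mathcal{O}(U_1')$, which is what allows the difference cocycle to be trivialized using a function regular on $U_1'$ alone and leads to the clean formula above. The remaining steps are bookkeeping, organized so that only the single scalar $c\,g^{2}$ — rather than the whole matrix of $\rho-1$ on the \v{C}ech complex — has to be evaluated, and so that one need only verify $(\rho-1)^g\neq0$ rather than determine the full module structure of $\operatorname{res}_{\langle\rho\rangle}\HdR$.
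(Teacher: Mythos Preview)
Your argument is correct and is a genuinely different route from the paper's. Both proofs hinge on the same unipotent element $\rho=\sigma=\theta\bigl(\begin{smallmatrix}1&1\\0&1\end{smallmatrix}\bigr)$, but they diverge from there. The paper keeps the cover $\{X-P_0,\,X-P_\infty\}$, refines it to a three-set cover $\mathcal{U}''$ because $\sigma$ does not preserve it, computes $\sigma\eta_i$ explicitly for all $i$, and then uses a torus element $T$ to force any hypothetical splitting $\alpha$ to satisfy $\alpha(\tilde\tau_\ell^{*})=a_\ell\tau_{g-\ell-1}+b_\ell\eta_{\ell+1}$; the contradiction comes from comparing coefficients in $\sigma\tau_\ell^{*}$. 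You instead pass at once to the $\rho$-stable cover $\{y\ne 0\}\cup\{X-P_\infty\}$, which eliminates the refinement step entirely, and you replace the eigenspace bookkeeping by a single nilpotency obstruction: a splitting would force $(\rho-1)^{g}=0$ on $\HdR$ because both pieces are $g$-dimensional Jordan blocks, and you exhibit one class $\nu_1$ with $(\rho-1)^{g}\nu_1\neq 0$ by tracking only the $\tilde\tau_{g-1}$-coefficient of $(\rho-1)\nu_g$.

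What each approach buys: the paper's computation yields the complete matrix of $\sigma$ on the explicit basis $\{\tau_i,\eta_i\}$, which is more information and could feed into determining the full $k[G]$-module structure of $\HdR$. Your approach is leaner---one Čech difference, one coboundary subtraction, one scalar $c\,g^{2}\neq 0$---and it proves the slightly sharper statement that the sequence already fails to split over the cyclic subgroup $\langle\rho\rangle$. The step you flagged as the main obstacle, that $y(x+1)^{-g}-yx^{-g}$ is regular at $P_\infty$, is exactly right: both terms have a simple pole there with the same leading coefficient in the uniformizer $t=y/x^{(p+1)/2}$, so the difference lies in $\mathcal{O}(U_1')$ and the cocycle collapses to the global form $g\bigl((x+1)^g-x^g\bigr)y^{-1}dx$ as you claim.
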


To prove this, we will first find a $k$-basis for $\vHdR$ and study the action of $G$ on this basis.

\begin{lemma}\label{lem:basisdR}
For $i=0,1,\dots, g-1$, let $\tau_i$ be the class of \begin{equation}(x^iy^{-1}dx, x^iy^{-1}dx, 0),
\end{equation} in $\vHdR$. For $i=1,2,\dots, g$, let $\eta_i$ be the class of \begin{equation}((1-2i)x^{1-g-i}d(yx^{-g-1}), -2ix^{2g-i}dy, yx^{-i})
\end{equation} in $\vHdR$. Together, these form a $k$-basis for $\vHdR$.
\end{lemma}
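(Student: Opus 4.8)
The plan is to verify three things: (1) each $\tau_i$ and each $\eta_i$ is a well-defined class in $\vHdR$, i.e.\ the triples given are Čech cocycles; (2) the $2g$ classes are linearly independent; (3) they span, which by a dimension count ($\dim_k \vHdR = 2g$, since $\vHdR \cong \HdR$ and $\HdR$ sits in the exact sequence \eqref{eq:exactseq} with $\HD$ and $\HF$ each of dimension $g$) follows once independence is known. So the real content is (1) and (2).

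For (1), the $\tau_i$ are immediate: $(x^iy^{-1}dx, x^iy^{-1}dx, 0)$ has $df_{12} = 0 = \omega_1 - \omega_2$, and Lemma~\ref{lem:HDbasis} shows $x^iy^{-1}dx$ is regular everywhere (in particular on $U_1$ and $U_2$) for $i = 0,\dots,g-1$. For the $\eta_i$ one must check the cocycle condition $d(yx^{-i}) = (1-2i)x^{1-g-i}d(yx^{-g-1}) - (-2i)x^{2g-i}dy$ as an identity of rational differentials — a direct computation using $dy$, $d(x^n) = nx^{n-1}dx$, and the product rule, which I would carry out explicitly but which is routine. The geometric point is then that $(1-2i)x^{1-g-i}d(yx^{-g-1})$ is regular on $U_1 = X - P_0$ (poles only allowed at $P_0$) and that $-2ix^{2g-i}dy$ is regular on $U_2 = X - P_\infty$; here I would use the divisor computations from the proof of Lemma~\ref{lem:HDbasis}, namely $\div(x) = 2P_0 - 2P_\infty$, $\div(y) = P_0 + \cdots + P_{p-1} - pP_\infty$, and $\div(dx) = P_0 + \cdots + P_{p-1} - 3P_\infty$, together with $\div(dy)$ obtained from $2y\,dy = (px^{p-1}-1)dx = -dx$ in characteristic $p$ (so $\div(dy) = \div(dx) - \div(y) = pP_\infty - P_0$, wait—recompute: $\div(dy) = \div(dx) - \div(2y) = (P_0+\cdots+P_{p-1}-3P_\infty) - (P_0+\cdots+P_{p-1}-pP_\infty) = (p-3)P_\infty$). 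One checks the exponents of $x$ in each term are chosen precisely so that the stated regularity holds for $i$ in the given range, and also that $yx^{-i} \in \mathcal{O}_X(U_1 \cap U_2)$, which is clear since $U_1 \cap U_2$ omits both $P_0$ and $P_\infty$.

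For (2), suppose a linear combination $\sum_{i=0}^{g-1} a_i \tau_i + \sum_{i=1}^{g} b_i \eta_i$ is a coboundary $(df_1, df_2, f_1 - f_2)$ with $f_1 \in \mathcal{O}_X(U_1)$, $f_2 \in \mathcal{O}_X(U_2)$. Comparing third coordinates gives $f_1 - f_2 = \sum_i b_i yx^{-i}$. The strategy is to exploit that $\mathcal{O}_X(U_1)$ consists of functions regular away from $P_0$ and $\mathcal{O}_X(U_2)$ of functions regular away from $P_\infty$: expanding $f_1, f_2$ and the combination $\sum b_i yx^{-i}$ in terms of the basis $\{x^m, x^m y\}$ and separating which monomials can appear in $f_1$ (only those with poles allowed at $P_0$) versus $f_2$ (only poles at $P_\infty$), one forces all the $b_i$ to vanish. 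Once $b_i = 0$ for all $i$, the combination reduces to $\sum a_i \tau_i$ being a coboundary, so $\sum a_i x^i y^{-1} dx = df_1$ with $f_1$ regular on $U_1$; but a regular $1$-form that is exact on $U_1$ would have to be $df_1$ with $f_1$ also forced to be a constant by examining behavior at $P_\infty$ (the $\tau_i$ map to the basis $\tilde\tau_i$ of $\HD \subseteq \HdR$, which is genuinely $g$-dimensional and injects), hence all $a_i = 0$.

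The main obstacle I expect is bookkeeping in step (2): carefully tracking, via the divisors of $x$, $y$, $dx$, $dy$, exactly which monomials $x^m$ and $x^m y$ lie in $\mathcal{O}_X(U_1)$ versus $\mathcal{O}_X(U_2)$, and showing the overlap forces the $b_i$ to vanish. The cocycle verification in step (1) for $\eta_i$ is also somewhat delicate because of the nested differential $d(yx^{-g-1})$, but it is a finite computation. Once independence is established, spanning is free from the dimension count, so no separate argument for (3) is needed.
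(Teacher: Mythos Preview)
Your proposal is correct; part (1) matches the paper's argument almost exactly (the paper simplifies the cocycle check for $\eta_i$ by first observing $d(yx^{-g-1})=x^{g-1}dy$, but the divisor bookkeeping is the same). The difference is in part (2). The paper does \emph{not} argue linear independence directly: instead it uses the exact sequence \eqref{eq:exactseq} to reduce to showing that the images $yx^{-i}$ of the $\eta_i$ in $\check H^1(\mathcal U,\mathcal O_X)\cong\HF$ form a basis, and proves this via the Serre-duality residue pairing, computing explicitly that $\langle \tilde\tau_{j-1},\, yx^{-i}\rangle = -2\delta_{ij}$ (this requires expanding $x^{-1}$ in the uniformizer $t=y/x^{(p+1)/2}$ at $P_\infty$). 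Your route---writing $f_1,f_2$ as $A(x)+B(x)y$, using the hyperelliptic involution to separate the $y$-parts, and observing that $B_1\in x^{-(g+1)}k[x^{-1}]$ while $B_2\in k[x]$ so neither can contribute the monomials $x^{-1},\dots,x^{-g}$---is a genuinely different and more elementary argument that avoids the residue computation entirely. The trade-off is that the paper's approach also establishes that $\{-2yx^{-i}\}$ is precisely the Serre-dual basis to $\{\tilde\tau_{i-1}\}$, a fact the paper leans on later when analyzing eigenspaces in the proof of Theorem~\ref{nosplit}; your argument proves the lemma as stated but would require a separate computation if that duality is needed downstream.
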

\begin{proof} We need to first determine that these are well-defined elements in $\vHdR$.
It is clear the $\tau_i$ are well defined in $\HdR$ since $x^iy^{-1}dx$ is regular for $i=0, 1, \dots, g-1$ (note: the $\tau_i$ are the images of the $\tilde{\tau}_i~\in~\check{H}^1(\mathcal{U}, \Omega^1_{X/k})$ under the canonical map). To show that $\eta_i$ is well defined, we first calculate that $d(yx^{-g-1})= x^{g-1}dy$. So
\begin{align*}
(1-2i)x^{1-g-i}d(yx^{-g-1}) - (-2ix^{2g-i}dy) &= (1-2i)x^{-i}dy+2ix^{2g-i}dy\\
&= d(yx^{-i}).
\end{align*}
The requirement that $\omega_1-\omega_2 = df_{12}$ for an element $(\omega_1, \omega_2, f_{12}) \in~\vHdR$ follows from this, but we still need to ensure that the $\omega_1$, $\omega_2$ and $f_{12}$ are regular on the appropriate open sets.  To do this, it is enough to calculate their divisors, which are:
\begin{align*}
\div((1-2i)x^{1-g-i}d(yx^{-g-1})) &= (-2i)P_0 + (2g+2i -2)P_{\infty}\\
\div(-2ix^{2g-i}dy) &= 2(2g-i)P_0 + (2i-2g-2)P_{\infty}\\
\div(yx^{-i}) &= (1-2i)P_0 + \sum_{j=1}^{p-1}P_j + (2i-p)P_{\infty}.
\end{align*}
Notice that the first equation refers to a 1-form regular on $U_1$, the second a 1-form regular on $U_2$, and the third a function regular on $U_1 \cap U_2$; we conclude that the $\eta_i$s are well-defined in $\vHdR$.

Since by Lemma \ref{lem:HDbasis}, the $\tilde{\tau}_i$s form a basis for $\check{H}^0(\mathcal{U}, \Omega^1_{X/k})$, and the $\tau_i$s are their images in $\vHdR$, it suffices to show that the image of $\{ \eta_i | i=1, \dots, g\}$ in $\check{H}^1(\mathcal{U}, \mathcal{O}_X)$ is a basis.

Recall that $\HF$ is the dual of $\HD$, and that under the canonical Serre-duality pairing there is a map $\HD \times \HF \rightarrow k$ given by the residue map (see \cite{Tate} or Appendix B of \cite{Con} for a complete discussion). We can use this to show that $\displaystyle -2yx^{-i}$ is the dual element to $y^{-1}x^{i-1}dx$. 

For $i=0, \dots, g-1$ and $j= 1, \dots, g$ the pairing gives
\begin{equation}
\langle y^{-1}x^idx, yx^{-j} \rangle = \operatorname{res}(x^{i-j}dx).
\end{equation}
This can be calculated by summing the residues at points $P$ where $P \in U_1$, or equivalently (by applying the Residue Theorem), the negative of the residue at $P_0$. Note in particular this means for the inner product to be non-zero, $x^{i-j}dx$ must have a pole somewhere on $U_1$ and a pole at $P_0$. By the calculations in Lemma~\ref{lem:HDbasis},
\begin{equation*}
\div(x^{i-j}dx) = (2i-2j+1)P_0 + P_0 + P_1 + \dots + P_{p-1} +(2j-2i- 3)P_\infty.
\end{equation*}
This has a pole at $P_0$ if $j \ge i+1$ and a pole at $P_\infty$ if $j \le i+1$. It follows that $\langle y^{-1}x^idx, yx^{-j} \rangle=0$ if $i \not = j-1$. If $i=j-1$, then $\langle y^{-1}x^idx, yx^{-j} \rangle=\operatorname{res}_{P_\infty}(x^{-1}dx)$. To calculate this residue, note that $\displaystyle t=\frac{y}{x^{(p+1)/2}}$ is a uniformizer at $P_\infty$ (using the proof of Lemma~\ref{lem:HDbasis}). Since \[ t^2 = \frac{y^2}{x^{p+1}} = \frac{x^p-x}{x^{p+1}} = \frac{1}{x} - \frac{1}{x^p}\] we find
\begin{equation*}
\frac{1}{x} = t^2 + \left(\frac{1}{x}\right)^p,
\end{equation*}
from which it follows (by repeatedly substituting) that $\displaystyle x^{-1} = \sum_{i=0}^\infty t^{2p^i}$. It is straightforward to show from this that $\operatorname{res}_{P_\infty}(x^{-1}dx)=-2$.

Thus $\{-2yx^{-i}\mid i=1, \dots, g\}$ is the dual basis of the $\{ \tilde{\tau}_{i-1}\mid i =1, \dots, g\}$, and in particular a basis for $\HF$.
\end{proof}

{\em Proof of Theorem~$\ref{nosplit}$.} Now that we have a basis for $\HdR$ as a split vector space, we need to consider how our group $G$ acts on these basis vectors.  Specifically, we are going to consider $\sigma \in G$, the equivalence class of  $\left((\begin{smallmatrix}
1 & 1\\
0 & 1\\
\end{smallmatrix}), 1\right)$, and its action on the span of $\{ \eta_i \mid i=1, \dots, g \}$, the subspace (non-canonically) isomorphic to $\HF$. We will calculate the image of each $\eta_i$ under $\sigma$, and use this to show that there is no splitting of the sequence as $k[G]$-modules.

Here we run into the problem that our cover $\mathcal{U}$ is not preserved under the group action.  While $\sigma U_2=U_2$, we get that $\sigma U_1= X - P_{p-1}$.  To account for this, we will refine our cover.  Let $U_3=X-P_1$.  Note that then $U_3=\sigma^{-1}U_1$.  Define the covers $\mathcal{U}'=\{U_2, U_3\}$ and $\mathcal{U}''=\mathcal{U} \cup \mathcal{U}'$. 

We can then calculate the action of $\sigma$ on $\vHdR$ using the following commutative diagram:

\centerline{
\begin{xy}
(0,3)*+{}="A";
(0,17)*+{}="B";
(10,0)*+{\HdR \cong \check{H}^1_{dR}(\mathcal{U})}="LD";
(10,20)*+{\HdR \cong \check{H}^1_{dR}(\mathcal{U})}="LU";
(60,0)*+{\check{H}^1_{dR}(\mathcal{U}')}="RD";
(60,20)*+{\check{H}^1_{dR}(\mathcal{U}'')}="RU";
{\ar@{->}^{\sigma} "A";"B"};
{\ar@{->}_{\qquad\cong}^{\qquad\rho} "RU";"LU"};
{\ar@{->}^{\rho'}_{\cong} "RU";"RD"};
{\ar@{->}^{\qquad\sigma} "LD";"RD"};
\end{xy}} 
\noindent where $\rho, \rho'$ are the restriction maps, which give isomorphisms of the spaces. As we did with $\check{H}^1_{dR}(\mathcal{U})$, we have a similar definition for $\check{H}^1_{dR}(\mathcal{U''})$, namely as the quotient of 
\begin{multline*} \{(\omega_1, \omega_2, \omega_3, f_{12}, f_{13}, f_{23}) \mid \omega_j \in \Omega_{X/k}(U_j), f_{jk} \in \mathcal{O}_X(U_j \cap U_k)\\ df_{jk}=\omega_j-\omega_k, f_{23}-f_{13}+f_{12}=0\},
\end{multline*}
by the subspace spanned by \[ \{(df_1, df_2, df_3, f_1-f_2, f_1-f_3, f_2-f_3) \mid f_j \in \mathcal{O}_X(U_j)\}.\] Then $\rho$ and $\rho'$ are respectively the projections to $(\omega_1, \omega_2, f_{12})$ and $(\omega_2, \omega_3, f_{23})$.

\begin{lemma}\label{lem:nubasisHdR}
For $i = 1, \dots, g$, let
\begin{align*}
\omega_{1i} &= (1-2i)x^{-g-i+1}d(yx^{-g-1})\\
\omega_{2i} &= -2ix^{p-i-1}dy\\
\omega_{3i} &= \sum_{m=1}^{p-i} \binom{p-i}{m} (1+2m)(x-1)^{m-3g} d(y(x-1)^{-g-1})\\
f_{12i} &= yx^{-i}\\
f_{23i} &= \sum_{m=1}^{p-i} \binom{p-i}{m}y(x-1)^{m-p}=\left(\frac{x^m-1}{x^p-1}\right)y\\
f_{13i} &= yx^{-i} +\left(\frac{x^m-1}{x^p-1}\right)y.
\end{align*}
Let $\nu_i =(\omega_{1i}, \omega_{2i}, \omega_{3i}, f_{12i}, f_{13i}, f_{23i})$. If $i=1, \dots, g$, then $\nu_i$ is a well-defined hyper 1-cocycle for the covering $\mathcal{U}''$. Moreover, under the canonical refinement map $\check{H}^1_{dR}(\mathcal{U}'') \isommap \check{H}^1_{dR}(\mathcal{U})$, the image of each $\nu_i$ is $\eta_i$.
\end{lemma}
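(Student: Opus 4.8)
The statement has two parts: first, that each $\nu_i$ is a well-defined hyper $1$-cocycle for $\mathcal{U}''$; second, that its image under the refinement map is $\eta_i$. The second part is essentially immediate once the first is known, since the canonical isomorphism $\check{H}^1_{dR}(\mathcal{U}'')\isommap\check{H}^1_{dR}(\mathcal{U})$ induced by the inclusion of covers $\mathcal{U}\subseteq\mathcal{U}''$ is nothing but the projection $(\omega_1,\omega_2,\omega_3,f_{12},f_{13},f_{23})\mapsto(\omega_1,\omega_2,f_{12})$; applied to $\nu_i$ this yields $\bigl((1-2i)x^{1-g-i}d(yx^{-g-1}),\ -2ix^{p-i-1}dy,\ yx^{-i}\bigr)$, which, because $p-i-1=2g-i$, is exactly the representative of $\eta_i$ chosen in Lemma~\ref{lem:basisdR}. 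So the content lies in checking that $\nu_i$ is a genuine cocycle: the relations $df_{jk}=\omega_j-\omega_k$ on the three double overlaps, the \v{C}ech identity $f_{12i}-f_{13i}+f_{23i}=0$, and the regularity of each entry on the appropriate open set.

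The key algebraic device is that the substitution $w=x-1$ puts the equation of $X$ back in the form $y^2=w^p-w$, so the computation in the proof of Lemma~\ref{lem:basisdR} applies verbatim to give $d\bigl(y(x-1)^{-g-1}\bigr)=(x-1)^{g-1}dy$. Feeding this into the definition of $\omega_{3i}$ turns it into a sum of the shape $\bigl(\sum_m\binom{p-i}{m}(1+2m)(x-1)^{m-p}\bigr)dy$, which collapses using $\sum_{m=1}^n\binom{n}{m}t^m=(1+t)^n-1$ and $m\binom{n}{m}=n\binom{n-1}{m-1}$, together with the characteristic-$p$ identities $p-i\equiv-i$, $2(g+1)\equiv1$ and $(x-1)^p=x^p-1$; the same manipulations put $f_{23i}$ and $f_{13i}$ in closed rational form. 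With these closed forms in hand the \v{C}ech identity $f_{12i}-f_{13i}+f_{23i}=0$ is immediate; the relation $df_{12i}=\omega_{1i}-\omega_{2i}$ is precisely the one already verified in Lemma~\ref{lem:basisdR} (indeed $\omega_{1i},\omega_{2i},f_{12i}$ literally are the components of the representative of $\eta_i$ there); and $df_{23i}=\omega_{2i}-\omega_{3i}$ I would check by a direct computation using $2y\,dy=-dx$ (so that $y\,dx=-2(x^p-x)dy$) and the reduction $\tfrac{x^p-x}{(x-1)^p}=1-(x-1)^{1-p}$, which turns the claim into an elementary identity of rational functions. Adding the first and third relations then gives $df_{13i}=\omega_{1i}-\omega_{3i}$ for free.

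What is left is the regularity of each entry on its open set, and this is where I expect the real work to be. For $\omega_{1i},\omega_{2i},f_{12i}$ it is contained in Lemma~\ref{lem:basisdR}. For the new entries I would compute divisors, using $\div(y)=\sum_tP_t-pP_\infty$ (Lemma~\ref{lem:HDbasis}) and $\div(x-1)=2P_1-2P_\infty$, to verify that $\omega_{3i}$ is regular away from $P_1$, that $f_{23i}$ has poles only along $P_1$ and $P_\infty$ (so it lies in $\mathcal{O}_X(U_2\cap U_3)=\mathcal{O}_X(X-P_1-P_\infty)$), and that $f_{13i}$ has poles only along $P_0$ and $P_1$ (so it lies in $\mathcal{O}_X(U_1\cap U_3)=\mathcal{O}_X(X-P_0-P_1)$). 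The delicate point throughout is the behaviour at $P_\infty$: in each of the combinations $f_{13i}=yx^{-i}+(\cdots)y$ and $\omega_{3i}=\omega_{2i}-df_{23i}$, the two terms individually have a pole of the same order at $P_\infty$, so one must pass to the uniformizer $t=y/x^{(p+1)/2}$ and expand via $x^{-1}=\sum_{j\ge0}t^{2p^j}$ (as in the proof of Lemma~\ref{lem:basisdR}) to check that the leading terms cancel there --- the coefficients $(1+2m)$ and the signs in the definition of $\nu_i$ being tailored precisely to make this happen. Once this divisor bookkeeping is finished, $\nu_i$ is a well-defined hyper $1$-cocycle for $\mathcal{U}''$, and by the projection computation of the first paragraph it refines to $\eta_i$.
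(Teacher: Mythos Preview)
Your proposal is correct and follows essentially the same approach as the paper: both reduce the new conditions to the Lemma~\ref{lem:basisdR} computation via the substitution $x\mapsto x-1$, read the \v{C}ech identity off the definition of $f_{13i}$, and handle the only genuinely delicate point (regularity of $f_{13i}$ at $P_\infty$) by expanding in the uniformizer $t=y/x^{(p+1)/2}$. The one organizational difference is that the paper obtains the regularity of $\omega_{3i}$, $f_{23i}$ and the relation $df_{23i}=\omega_{2i}-\omega_{3i}$ simultaneously by summing the translated identities $(1+2m)(x-1)^{m-3g}d(y(x-1)^{-g-1})-2m(x-1)^{m-1}dy=d(y(x-1)^{m-p})$ with weights $\binom{p-i}{m}$, whereas you separate these into a direct differentiation (using $2y\,dy=-dx$) and an independent divisor check; both routes are valid and equivalent in substance.
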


\begin{proof} Since it is clear that the projection of $\nu_i$ to $\check{H}^1_{dR}(\mathcal{U})$ is $\eta_i$, it suffices to show that each $\nu_i$ is well-defined.

Since we know from Lemma \ref{lem:basisdR} that the $\eta_i$ are well-defined, we can conclude that $\omega_{1i} \in \Omega_{X/k}(U_1), \omega_{2i} \in \Omega_{X/k}(U_2)$, $f_{12i} \in \mathcal{O}_X(U_1 \cap U_2)$, and $df_{12i}= \omega_{1i}-\omega_{2i}$ for $i=1, \dots, g$. It then remains to verify the following conditions:
\begin{enumerate}
\item $\omega_{3i} \in \Omega_{X/k}(U_2)$
\item $f_{23i} \in \mathcal{O}_X(U_2~\cap~U_3)$
\item $f_{13i} \in \mathcal{O}_X(U_1~\cap~U_3)$
\item $df_{23i} = \omega_{2i} - \omega_{3i}$
\item $df_{13i} = \omega_{1i}-\omega_{3i}$
\item $f_{23i} - f_{13i} + f_{12i} = 0$
\end{enumerate}
Note that 6 is clear from the definition, and that with the other conditions it will imply 5. Next we will show that conditions 1, 2 and 4 on $f_{23i}$ and $\omega_{3i}$ follow from previous calculations. We know from Lemma~\ref{lem:basisdR}, that $(1-2i)x^{1-g-i}d(yx^{-g-1}) \in \Omega_{X/k}(U_1)$, $-2ix^{2g-i}dy \in \Omega_{X/k}(U_2)$, and that their difference is $d(yx^{-i}) \in \Omega_{X/k}(U_1 \cap U_2)$. Changing coordinates by replacing $x$ with $x-1$, we can conclude that 1-forms that previously were regular on $U_1$ will now be regular on $U_3$, while those regular on $U_2$ remain so, and that the equality still holds. Thus we get that
\begin{equation}\label{eq:calcbasis1}\displaystyle
(1-2i)(x-1)^{1-g-i}d(y(x-1)^{-g-1})+2i(x-1)^{2g-i}dy =d(y(x-1)^{-i})
\end{equation}
where \begin{align*}
(1-2i)(x-1)^{1-g-i}d(y(x-1)^{-g-1}) &\in \Omega_{X/k}(U_3)\\
-2i(x-1)^{2g-i}dy &\in \Omega_{X/k}(U_2)\\
y(x-1)^{-i} &\in \mathcal{O}_X(U_2~\cap~U_3).
\end{align*} If we substitute $\displaystyle i=p-m$ into (\ref{eq:calcbasis1}) (recalling that $p=2g+1$), it becomes
\begin{equation*}
(1+2m)(x-1)^{m-3g}d(y(x-1)^{-g-1}) - 2m (x-1)^{m-1}dy = d(y(x-1)^{m-p}).
\end{equation*}
Now take, multiply it by $\displaystyle \binom{p-i}{m}$, take the sum of these from $m=1$ to $m=p-i$. Referencing back to the definitions of $\omega_{3i}$ and $f_{23i}$, we find
\begin{equation}\label{eq:interm}
-\omega_{3i} + \sum_{m=1}^{p-1} \binom{p-1}{m} 2m (x-1)^{m-1}dy = df_{23i}.
\end{equation}
It follows from this that conditions 1 and 2 hold, that is $\omega_{3i} \in \Omega_{X/k}(U_2)$ and $f_{23i} \in \mathcal{O}_{X/k}(U_2 \cap U_3)$. Now note that
\begin{align*}
\omega_{2i} &= -2i x^{p-i-1}dy\\
&= -2i \sum_{\ell=0}^{p-i-1} \binom{p-i-1}{\ell} (x-1)^{\ell}dy\\
&= \sum_{m=1}^{p-i} \binom{p-i}{m} 2m (x-1)^{m-1}dy.
\end{align*}
So we can substitute this into (\ref{eq:interm}), and it follows that $\omega_{2i}-\omega_{3i}=df_{23}$, which is condition 4.

The only thing that remains is to check that $f_{13i} \in \mathcal{O}_X(U_1 \cap U_3)$. Since $f_{23i}$ and $f_{12i}$ are regular on $U_1 \cap U_2 \cap U_3$, we need only check that $f_{13i}$ is regular at $P_\infty$. We can do this by calculating the image of $f_{13i}$ in $\operatorname{Frac}(\mathcal{O}_{X,P_\infty})$. Recall from the proof of Lemma~\ref{lem:basisdR}, that $\displaystyle t = \frac{y}{x^{(p+1)/2}}$ is a uniformizer at $P_\infty$. Using this and the definition of $f_{13i}$, we calculate that $f_{13i}=O(t)$. Since this means $f_{13i}$ has no pole at $P_\infty$, we can conclude $f_{13i}$ is regular on $U_1 \cap U_3$. So $f_{13i} \in \mathcal{O}_X(U_1 \cap U_3)$.
\end{proof}

Now we conclude the proof of Theorem \ref{nosplit}. By Lemma \ref{lem:nubasisHdR}, each $\nu_i$ is the inverse image of $\eta_i$ under the canonical restriction map $\check{H}_{dR}^1(\mathcal{U}'') \isommap \check{H}_{dR}^1(\mathcal{U})$. The projection of $\nu_i$ onto $\check{H}_{dR}^1(\mathcal{U}')$ gives us $(\omega_{2i}, \omega_{3i}, f_{23i})$, and to understand $\sigma \eta_i$, we need only calculate $\sigma(\omega_{2i}, \omega_{3i}, f_{23i})$.

So we need to consider the image of $(\omega_{3i}, \omega_{2i}, f_{23i})$ under $\sigma$:
\begin{align*}
\sigma \omega_{3i} &= -\frac{i}{p-i}\sum_{j=1}^{p-i}\binom{p-i}{j}(1+2j)x^{j-3g}d(yx^{-g-1})\\
\sigma \omega_{2i} &= -2i(x+1)^{p-i-1}dy = -\frac{i}{p-i}\sum_{j=1}^{p-i}\binom{p-i}{j}2jx^{j-1}dy\\
\sigma f_{23i} &= -\frac{i}{p-i}\sum_{j=1}^{p-i}\binom{p-i}{j}yx^{j-p}.
\end{align*}
We can restate this as
\begin{equation}\label{eq:sigmadoes}
\sigma(\omega_{3i}, \omega_{2i}, f_{23i}) = -\frac{i}{p-i}\sum_{j=1}^{p-i}\binom{p-i}{j} \tilde{\eta}_{p-j}
\end{equation}
where
\begin{equation*}
\tilde{\eta}_\ell = ((1-2\ell)x^{1-g-i}d(yx^{-g-1}), -2\ell x^{2g-\ell}dy, yx^{-\ell}).
\end{equation*}
We would like to be able to write this in terms of the basis from Lemma~\ref{lem:basisdR}. If $g+1 \le j \le p-1$, then each $\tilde{\eta}_{p-j} = \eta_{p-j}$, which is such a basis element. However, for $1 \le j \le g$, this is not the case and so we need to rewrite $\tilde{\eta}_{p-j}$ in terms of the basis. We find that for $1 \le j \le g$,
\begin{equation*}
\tilde{\eta}_{p-j}-(d(yx^{j-p}), 0, yx^{j-p})=(2j x^{j-1}dy, 2j x^{j-1}dy, 0) = -2j\tau_{j-1}.
\end{equation*}
Since the above is in the same equivalence class as $\tilde{\eta}_{p-j}$ within $\check{H}_{dR}^1(\mathcal{U})$, for $1 \le j \le g$, $\tilde{\eta}_{p-j}$ is in the image of $\HD$.  This shows that the space spanned by the $\eta_i$ is not stable under the action of $\sigma$.

Suppose there is a $k[G]$-linear map $\alpha: \HF \rightarrow \HdR$ that splits the exact sequence in Equation (\ref{eq:exactseq}). Let $\{ \tilde{\tau_\ell}^* \in \HF \mid \ell=0, \dots, g-1 \}$ be a dual basis of $\{\tilde{\tau_\ell} \in \HD \mid \ell=0, \dots, g-1 \}$ under the Serre duality pairing. Then define $\tau_\ell^* = \alpha(\tilde{\tau_\ell}^*) \in \HdR$. We would like to know how to write $\tau_\ell^*$ in terms of the $\tau_i$ and $\eta_i$, the basis given in Lemma~\ref{lem:basisdR}.

Fix $t \in \F_p^\times$. and let $T$ indicate the element of $G$ such that \[(x,y) \mapsto (t^2x, ty).\]
It is easy to calculate that $T\tau_i = t^{2i+1}\tau_i$ and $T\eta_{j} = t^{1-2j}\eta_j$. Since the pairing is $G$-equivariant, as is the map $\alpha$, it has to be the case that $T\tau_\ell^* = t^{-2\ell-1}\tau_\ell^*$. This implies that $\tau_\ell^*$ is in the $t^{-2\ell-1}$-eigenspace of $T$, which is spanned by $\tau_{g-\ell-1}$ and $\eta_{\ell+1}$. So there exist unique nontrivial $a_\ell, b_\ell \in k$ such that $\tau_\ell^* = a_\ell \tau_{g-\ell-1} + b_\ell \eta_{\ell+1}$.

Since $\alpha$ is $G$-equivariant, $\sigma \tau_\ell^* = \sigma \alpha(\tilde{\tau_\ell}^*) =\alpha(\sigma \tilde{\tau_{\ell}}^*)$, so it must be that
\begin{equation}\label{asbs} a_\ell (\sigma \tau_{g-\ell-1}) + b_\ell (\sigma \eta_{\ell+1}) = \sigma \tau_\ell^* \in \operatorname{im} \alpha.\end{equation} However, we know from the proof of Theorem~\ref{irrHom} and Equation~\ref{eq:sigmadoes} that
\[ \sigma \tau_{g-\ell-1} = \sum_{i=0}^{g-\ell-1} \binom{g-\ell-1}{i} \tau_i\]
and
\[\sigma \eta_{\ell+1} = -\frac{\ell+1}{p-\ell-1}\left(-2 \sum_{i=1}^{g}\binom{p-\ell-1}{i}i \tau_{i-1}+\sum_{i=g+1}^{p-\ell-1} \binom{p-\ell-1}{i} \eta_{p-i}\right).\]

It is simple to show, by plugging this into Equation (\ref{asbs}), that no nontrivial $a_\ell, b_\ell$ satisfy this, giving a contradiction. Thus the exact sequence in Equation (\ref{eq:exactseq}) does not split under the action of $G$. \qed

\begin{cor}
$\HdR$ is a non-projective indecomposable $k[G]$-module.
\end{cor}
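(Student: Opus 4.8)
The plan is to read off both properties from what is already established: the irreducibility of $\HD$ and $\HF$ (Corollary~\ref{irreducible}), the non-splitting of the sequence (\ref{eq:exactseq}) (Theorem~\ref{nosplit}), and a comparison of $\dim_k \HdR$ with the order of a Sylow $p$-subgroup of $G$.

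For indecomposability I would argue by composition length. By Corollary~\ref{irreducible}, $\HD$ and $\HF$ are simple $k[G]$-modules, so the sequence (\ref{eq:exactseq}) exhibits $\HdR$ as a module of composition length exactly $2$ containing $\HD$ as a simple submodule. Suppose, for contradiction, that $\HdR = M_1 \oplus M_2$ with $M_1$ and $M_2$ nonzero. Their composition lengths are then positive integers summing to $2$, so each $M_i$ is simple and $\HdR$ is semisimple; but every submodule of a semisimple module is a direct summand, so the canonical inclusion $\HD \hookrightarrow \HdR$ would admit a retraction, i.e. the sequence (\ref{eq:exactseq}) would split, contradicting Theorem~\ref{nosplit}. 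Hence $\HdR$ is indecomposable.

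For non-projectivity I would invoke the standard fact that restriction of a projective $k[G]$-module to a subgroup is again projective, so its restriction to a Sylow $p$-subgroup $P$ is free over $k[P]$ (in characteristic $p$, every projective module over the group algebra of a $p$-group is free); hence $|P|$ divides the $k$-dimension of any projective $k[G]$-module. Since $|G| = 2(p-1)p(p+1)$ and $p > 2$, the prime $p$ divides none of $2$, $p-1$, $p+1$, so $|P| = p$. On the other hand $\dim_k \HdR = \dim_k \HD + \dim_k \HF = 2g = p-1$, which is not divisible by $p$. Therefore $\HdR$ is not projective, and together with the previous paragraph it is a non-projective indecomposable $k[G]$-module.

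I do not expect a genuine obstacle here: each ingredient is either elementary modular representation theory or already proved above. The one point worth recording carefully is the length count — that the composition length of $\HdR$ equals $2$, which is exactly where the simplicity of $\HD$ and $\HF$ enters; without it, a decomposition into two nonzero summands would not immediately force semisimplicity. One could alternatively derive non-projectivity from the fact that a projective indecomposable module over the symmetric algebra $k[G]$ has isomorphic head and socle (here $\HF$ and $\HD$), but that route would require first analyzing whether $\HD \cong \HF$ as $k[G]$-modules, so the dimension count is the cleaner option.
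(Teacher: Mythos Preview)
Your proof is correct and uses the same ingredients as the paper: irreducibility of $\HD$ and $\HF$, the non-splitting of (\ref{eq:exactseq}), and the divisibility constraint from the Sylow $p$-subgroup. The only difference is cosmetic---you package indecomposability via the composition-length/semisimplicity observation, whereas the paper does an explicit case split on whether a putative summand meets $\operatorname{im}\psi$; both routes amount to the same thing.
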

\begin{proof}
Corollary 7 on page 33 of \cite{Alp} states that the order of a Sylow $p$-group of $G$ divides the dimension of any projective $k[G]$-module. A Sylow $p$-group of $G$ has order $p$ and the dimension of $\HdR$ is $2g=p-1$. So $\HdR$ is non-projective.

Suppose there exist nontrivial $k[G]$-submodules $M$ and $N$ such that \[\HdR = M \oplus N. \] 
Consider the sequence from Theorem~\ref{nosplit}:
\begin{equation}\label{eq:unsplittable}
0 \rightarrow \HD \xrightarrow{\psi} \HdR \xrightarrow{\psi'} \HF \rightarrow 0 \end{equation}
where $\psi, \psi'$ are the canonical maps. Suppose that $\operatorname{im} \psi \cap M=0$. Then $\operatorname{ker} \psi' \cap M = 0$, so $\psi'|M$ gives an isomorphism between $M$ and some submodule of $\HF$. However, since $\HF$ is irreducible by Corollary~\ref{irreducible}, this means that $M$ is isomorphic to $\HF$ as a $k[G]$-module. However, this gives a splitting of the exact sequence in (\ref{eq:unsplittable}), which contradictions Theorem~\ref{nosplit}.

Now suppose instead that $\operatorname{im} \psi \cap M$ is nonzero. Since $\operatorname{im} \psi \cong \HD$ as a $k[G]$-module, and $\HD$ is irreducible by Corollary~\ref{irreducible}, we can conclude that $\operatorname{im} \psi \subseteq M$. So $\operatorname{ker} \psi' \subseteq M$, and thus $\operatorname{ker} \psi' \cap N$ is zero. Thus, $N$ is isomorphic to a $k[G]$-submodule of $\HF$ through $\psi'$. Since $N$ is nontrivial and $\HF$ is irreducible by Corollary~\ref{irreducible}, this means that $\psi'$ induces an isomorphism between $N$ and $\HF$. However, this gives a splitting of the exact sequence in (\ref{eq:unsplittable}), which contradicts Theorem \ref{nosplit}.
\end{proof}

\bibliographystyle{plain}
\bibliography{Hortschrefs}

\end{document}